\newcommand{\norm}[2][]{\|#2\|_{#1}}
\newcommand \RR {\mathbb{R}}
\newcommand \EE {\mathbb{E}}
\DeclareMathOperator{\sign}{sign}
\newcommand{\scp}[2]{\langle #1 \,,\, #2\rangle}
\newtheorem{theorem}{Theorem}[section]
\newtheorem{lemma}[theorem]{Lemma}
\newtheorem{definition}[theorem]{Definition}
\newtheorem{remark}[theorem]{Remark}
\newtheorem{example}{Example}[section]
\newcommand{\eqdef}{\overset{\text{def}}{=}}
\title{\LARGE \bf
An accelerated randomized Bregman-Kaczmarz method for strongly convex linearly constraint optimization*
}
\author{Lionel Tondji$^{1}$,  Dirk A. Lorenz$^{1}$ and Ion Necoara$^{2,3}$
\thanks{$^{1}$Institute of Analysis and Algebra, TU Braunschweig, 38092 Braunschweig, Germany, \texttt{\small l.ngoupeyou-tondji@tu-braunschweig.de}, \texttt{\small d.lorenz@tu-braunschweig.de}.}%
\thanks{$^{2}$Automatic Control and Systems Engineering Department, University Politehnica Bucharest,
060042 Bucharest, Romania, {\tt\small ion.necoara@upb.ro}}%
\thanks{$^{3}$Gheorghe Mihoc-Caius Iacob Institute of Mathematical Statistics and Applied Mathematics of the Romanian Academy, 050711 Bucharest, Romania.} 
}
\begin{document}

\maketitle
\thispagestyle{empty}
\pagestyle{empty}

\begin{abstract}
  In this paper, we propose a randomized accelerated method for the minimization of a strongly convex function under linear constraints. The method is of Kaczmarz-type, i.e. it only uses a single linear equation in each iteration. To obtain acceleration we build on the fact that the Kaczmarz method is dual to a coordinate descent method. We use a recently proposed acceleration method for the randomized coordinate descent and transfer it to the primal space.  This method inherits many of the attractive features of the accelerated coordinate descent method, including its worst-case convergence rates. A theoretical analysis of the convergence of the proposed method is given. Numerical experiments show that the proposed method is more efficient and faster than the existing methods for solving the same problem.

\end{abstract}

\section{Introduction}
\noindent We consider the fundamental problem of approximating solutions of large scale linear systems of the form:
\begin{equation}
\label{eq:LS}
\mathbf{A}x = b
\end{equation}
with matrix $\mathbf{A} \in \RR^{m \times n}$ and right hand side $b \in \RR^m$. We consider the case that the full matrix is not accessible simultaneously, but that one can only work with single rows of the system~\eqref{eq:LS} at a time.
Such problems arise in several fields of engineering and physics problems, such as sensor networks,
signal processing, partial differential equations, filtering, computerized tomography, optimal control, inverse problems and machine learning, to name just a few~\cite{tropp2011improved,olshanskii2014iterative,rabelo2022kaczmarzillposed,hounsfield1973computerized,jiao2017preasymptotic,patrascu2017nonasymptotic}.
Given the possibility of multiple solutions of~\eqref{eq:LS}, we set out to find the unique solution, characterized by the function $f$, i.e.
\begin{equation}
\label{eq:PB}
f^* \eqdef \min_{x \in \RR^n} f(x) \quad \text{subject to} \quad  \mathbf{A}x=b,
\end{equation}
with a strongly convex function $f$. However, we will not assume smoothness of $f$. One possible example is $f(x) = \lambda \cdot \|x\|_1 + \frac{1}{2}\|x\|^2_2$ and it is known that this function favors sparse solutions for appropriate choices of $\lambda > 0$, see~\cite{cai2009linearized,yin2010analysis,lorenz2014linearized,LWSM14,SL19}.  We assume throughout the paper that both $m$ and $n$ are large and that the system is consistent. We denote by $a_{i}^{T}$ the rows of $\mathbf{A}$ and assume $a_i \neq 0$ for all $i \in [m]:=\{1,\dots,m\}$. Since $f$ is strongly convex, problem~\eqref{eq:PB} has a unique solution $\hat x$. In applications,  it is usually sufficient to find a point which is not too far from $\hat x$. In particular, one chooses the error tolerance $\varepsilon >0$ and aims to find a point $x$ satisfying $\|x - \hat x\|_{2} \leq \varepsilon$. Since our method will be a stochastic method, and hence, the iterates $x$ are random vectors. Hence, our goal will be to compute approximate solutions of~\eqref{eq:PB} that fulfills $\mathbb{E}[\|x - \hat x\|_{2}] \leq \varepsilon$, where $\mathbb{E}[ \cdot]$ denotes the expectation w.r.t. randomness of the algorithm.
\vspace{-0.35cm}

\subsection{Related work}
\noindent The linear system~\eqref{eq:LS} may be so large that full matrix operations are very expensive or even infeasible. Then, it appears desirable to use iterative algorithms
with low computational cost and storage per iteration that produce good approximate solutions of~\eqref{eq:PB} after relatively few iterations.  The Kaczmarz method~\cite{Kac37} and its randomized variants~\cite{SV09,gower2015randomized,gower2019adaptive,necoara2019faster} are used to compute the minimum $\ell_2$-norm solutions of consistent linear systems.  In each iteration $k$, a row vector $a_i^\top$
of $\mathbf{A}$ is chosen at random from the system~\eqref{eq:LS} and the current iterate $x_k$ is projected onto the solution space of that equation to obtain $x_{k+1}$. Note that this update rule requires low cost per iteration and storage of order $\mathcal{O}(n)$. Recently, a new variant of the randomized Kaczmarz (RK) method namely the randomized sparse Kaczmarz method (RSK)~\cite{LWSM14,SL19} with almost the same low cost and storage requirements has shown good performance in approximating sparse solutions of large consistent linear systems. The papers~\cite{lorenz2014linearized,SL19} analyze this method by interpreting it as a sequential, randomized Bregman projection method (where the Bregman projection is done with respect to the function $f$) while~\cite{P15} connects it with the coordinate descent method via duality.
Variations of RSK including block/averaging variants~\cite{needell2014paved,P15,necoara2019faster,du2020randomized,miao2022greedy}, averaging methods~\cite{tondji2022faster} and adaptions to least squares problems are given in \cite{zouzias2013randomized,Du19,schopfer2022extended}. In those variants, one usually needs to have access to more than one row of the matrix $\mathbf{A}$ at the cost of increasing the  memory. Coordinate descent methods are often considered in the context of minimizing composite convex objective functions \cite{necoara2016,nesterov2012efficiency,fercoq2015accelerated}. At each iteration, they update only one coordinate of the vector of variable, hence using partial derivatives rather than the whole
gradient. A randomized accelerated coordinate descent method  was proposed in~\cite{nesterov2012efficiency} for smooth functions. Accelerated methods transform the proximal coordinate descent, for which the optimality gap decreases as $\mathcal{O}(1/k)$, into an
algorithm with optimal $\mathcal{O}(1/k^2)$ complexity under mild additional computational cost~\cite{nesterov2012efficiency,fercoq2015accelerated}.


\subsection{Contribution}
\label{sec:contributions}
\noindent To the best of our knowledge, accelerated Bregman-Kaczmarz variants have not yet been
proposed and analyzed in the literature. Specifically,
we make the following contributions:
\begin{itemize}
    \item Beyond interpreting the Bregman-Kaczmarz method as a dual coordinate descent, we propose an accelerated variant using only one row of the matrix. We refer to it as Accelerated Randomized Bregman-Kaczmarz method (ARBK) with more details in Algorithm~\ref{alg:ARBK}.

    \item  By exploiting the connection between primal and dual updates, we obtain convergence as a byproduct and convergence rates which have not been available so far. We prove that our accelerated method  leads to faster convergence than its standard counterpart. 

    \item We also validate this empirically and we provide implementations of our algorithm in \texttt{Python}.
\end{itemize}

\subsection{Outline.}
\noindent The remainder of the paper is organized as follows. Section~\ref{sec:basicnotions} provides notations, a brief overview on convexity and Bregman distances. In Section~\ref{sec:interpretation} we state our method and we give an interpretation of it in the dual space. Section~\ref{sec:convergence} provides convergence guarantees for our proposed method. In Section~\ref{sec:numerics}, numerical experiments demonstrate the effectiveness of our method and provide insight regarding its behavior and its hyper-parameters. Finally,
Section~\ref{sec:conclusion} draws some conclusions.


\section{Notation and basic notions}
\label{sec:basicnotions}
\noindent For integers $m$ we denote $[m] \eqdef \{1,2,\dots,m\}$. Given a symmetric positive definite matrix $\mathbf{B},$ we denote the induced inner product by 
\begin{equation*}
   \langle x, y \rangle_{\mathbf{B}} \eqdef  \langle x, \mathbf{B} y \rangle = \sum_{i,j \in [n]} x_i \mathbf{B}_{ij} y_j , \quad x, y \in \RR^{n}
\end{equation*}
and its induced norm by $\|.\|_{\mathbf{B}}^2 \eqdef \langle \cdot, \cdot \rangle_{\mathbf{B}}$ and use the short-hand notation $\|.\|_2$ to mean $\|.\|_{\mathbf{I}}$. For an $n\times m$ real matrix $\mathbf{A}$ we denote by $\mathcal{R}(\mathbf{A}), \|\mathbf{A} \|_F$ and $a_i^\top$ its range space, its Frobenius norm and its $i$-th row, respectively. By $e_i$ we denote the $i$th column of the identity matrix $\mathbf{I}_n \in \RR^{n \times n}$. For a random vector $x_i$ that depends on a random index $i$ (where $i$ is chosen with probability $p_{i}$) we denote $\EE [x_i] \eqdef \sum_{i\in [q]} p_ix_i$ and we will just write $\EE [x_i]$ when the probability distribution is clear from the context. Given a vector $x \in \RR^n$, we define the soft shrinkage operator, which acts componentwise on a vector $x$ as
\begin{equation} \label{eq:S}
\big(S_{\lambda}(x))_j = \max\{|x_j|-\lambda,0\} \cdot \sign(x_j)\,.
\end{equation}

Now we collect some basic notions on convexity and the Bregman distance.
Let $f:\RR^n \to \RR$ be convex (note that we assume that $f$ is finite everywhere, hence it is also continuous). The \emph{subdifferential} of $f$ at any $x \in \RR^n$ is defined by
\[
\partial f(x) \eqdef \{x^* \in \RR^n| f(y) \ge f(x) + \langle x^*, y-x \rangle, \forall\, y \in \RR^n \},
\]
which is nonempty, compact and convex. The function  $f:\RR^n \to \RR$ is said to be \emph{$\alpha$-strongly convex}, if for all $x,y \in \RR^n$ and subgradients $x^* \in \partial f(x)$ we have
\[
f(y) \ge f(x) + \scp{x^*}{y-x} + \tfrac{\alpha}{2} \cdot \norm[2]{y-x}^2 \,.
\]
If $f$ is $\alpha$-strongly convex, then $f$ is coercive, i.e.
\[
\lim_{\norm[2]{x} \to \infty} f(x)=\infty \,,
\]
and its \emph{Fenchel conjugate} $f^{*}:\RR^n \to \RR$ given by 
\[
f^*(x^*)\eqdef \sup_{y \in \RR^n} \scp{x^*}{y} - f(y)
\]
is also convex, finite everywhere and coercive. Additionally, $f^*$ is differentiable with a \emph{Lipschitz-continuous gradient} with constant $L_{f^*}=\frac{1}{\alpha}$, i.e. for all $x^*,y^* \in \RR^n$ we have
\[
\norm[2]{\nabla f^*(x^*)-\nabla f^*(y^*)} \le L_{f^*} \cdot \norm[2]{x^*-y^*} \,,
\]
which implies the estimate
\begin{align} \small \label{eq:Lip}
f^*(y^*)\!
\le \!f^*(x^*)\! -\!\scp{\nabla f^*(x^*)}{y^*-x^*}\! +\! \tfrac{L_{f^*}}{2}\norm[2]{x^*\!\!-y^*}^2. 
\end{align}
The function $f^*$ is said to have componentwise Lipschitz continuous gradient if
\[
|\nabla_i f^*(x^* + he_i) - \nabla_i f^*(x^*)| \leq L_{f^*,i} \cdot |h|,
\]
for all $x^* \in \RR^n, h \in \RR, i\in [n].$ 

\begin{definition} \label{def:D}
The \emph{Bregman distance} $D_f^{x^*}(x,y)$ between $x,y \in \RR^n$ with respect to $f$ and a subgradient $x^* \in \partial f(x)$ is defined as
\[
D_f^{x^*}(x,y) \eqdef f(y)-f(x) -\scp{x^*}{y - x}\,.
\]
\end{definition}

Fenchel's equality states that $f(x) + f^*(x^*) = \scp{x}{x^*}$ if $x^*\in\partial f(x)$ and implies that the Bregman distance can be written as
\[
D_f^{x^*}(x,y) = f^*(x^*)-\scp{x^*}{y} + f(y)\,.
\]

\begin{example}\cite{SL19}
\label{exmp:f}
The objective function
\begin{equation} \label{eq:spf}
f(x) \eqdef \lambda \cdot \norm[1]{x} + \tfrac{1}{2} \cdot \norm[2]{x}^{2}
\end{equation}
is strongly convex with constant $\alpha=1$ and its conjugate function can be computed with the soft shrinkage operator from Eq~\eqref{eq:S}
\[ 
f^{*}(x^{*}) = \tfrac{1}{2} \cdot \norm[2]{S_{\lambda}(x^{*})}^{2}, \quad \mbox{with} \quad \nabla f^{*}(x^{*}) = S_{\lambda}(x^{*}) \,.
\]
It Fenchel conjugate $f^*$ has componentwise Lipschitz gradient with constants $L_{f^*,i} = 1$ and for any $x^*=x+\lambda \cdot s \in \partial f(x)$ we have
\[
D_f^{x^*}(x,y)=\frac{1}{2} \cdot \norm[2]{x-y}^2 + \lambda \cdot(\norm[1]{y}-\scp{s}{y}) \,.
\]
which give us $D_f^{x^*}(x,y)=\frac{1}{2}\norm[2]{x-y}^2$ for $\lambda = 0.$
\end{example}

\medskip 

\noindent The following inequalities are crucial for the convergence analysis of the randomized algorithms.
They immediately follow from the definition of the Bregman distance and the assumption of strong convexity of $f$, see~\cite{LSW14}.
For  $x,y \in \RR^n$ and $x^* \in \partial f(x)$, $y^* \in \partial f(y)$ we have
\begin{align} \label{eq:D}
&\frac{\alpha}{2} \norm[2]{x-y}^2  
\le  D_f^{x^*}(x,y) \le \scp{x^*-y^*}{x-y}, 
\end{align}

\section{Coordinate descent and  Bregman-Kaczmarz method}
\label{sec:interpretation}
\noindent Note that by a proper scalling of $f$ we can always assume $\alpha = 1$. Therefore, in the
sequel we consider 1-strongly convex function $f$. In particular, using the conjugate of $f$ we can write the dual function for the problem \eqref{eq:PB} as:
\begin{align*}
 \Psi(y) &= \inf_x \mathcal{L}(x,y) \\
 &= \inf_x (f(x) - y^\top(\mathbf{A}x-b) ) \\
 &= b^\top y - f^*(\mathbf{A}^\top y),
\end{align*}
where $\mathcal{L}(x,y)$ denotes the Lagrangian function. The dual problem of \eqref{eq:PB} is given by:
\begin{equation}
\label{eq:DP}
\Psi^* \eqdef \min_y\left[ \Psi(y) := f^*(\mathbf{A}^\top y) - b^\top y\right]
\end{equation}
The primal and dual optimal solutions $\hat x$ and $\hat y$ are connected through
\begin{align}
    \mathbf{A} \hat x = b, \quad
     \hat x = \nabla f^*(\mathbf{A}^\top \hat y)
    \label{eq:opt_sol}
\end{align}
and the optimal solution set $\mathcal{Y}^*$ of~\eqref{eq:DP} is nonempty. Further, the dual function $\Psi$ is unconstrained, differentiable and its gradient is given by the following expression
\begin{equation*}
    \nabla \Psi(y) = \mathbf{A}\nabla f^*(\mathbf{A}^\top y) - b.
\end{equation*}
Moreover, the gradient $\nabla \Psi$ of the dual function is Lipschitz and componentwise Lipschitz continuous w.r.t. the Euclidean norm $\|\cdot\|_2$, with constant 
$L_{\Psi} = \|A\|^2_2$ and $L_{\Psi,i} = \|a_i\|^2_2$, respectively. 
The coordinate descent update applied to $\Psi$ from \eqref{eq:DP} reads \cite{necoara2016,nesterov2012efficiency}:
\begin{align*}
    y_{k+1} &= y_k - \dfrac{1}{L_{\Psi,i}} e_i \nabla_i \Psi(y_k) \\
    &= y_k - \dfrac{\langle a_i, \nabla f^*(\mathbf{A}^\top y_k) \rangle - b_i}{\|a_i\|^2_2} \cdot e_i
\end{align*}

By using the following relations: 
\begin{align*}
    x^*_k = \mathbf{A}^\top y_k, \quad 
    x_k =  \nabla f^*(x^*_k)
\end{align*}
we just showed that dual coordinate descent iterates transfer to Bregman-Kaczmarz iterates as shown in Algorithm~\ref{alg:BK}.
\begin{algorithm}[ht]
  \caption{Bregman-Kaczmarz method (BK)}
  \label{alg:BK}
  \begin{algorithmic}[1]
    \STATE {choose  $x_0 \in \RR^n$ and set $x^*_0 = x_0$.} 
    \STATE{\textbf{Output:} (approximate) solution of $\text{min}_{\mathbf{A}x=b} \,\, f(x)$ }
    \STATE initialize $k=0$
    \REPEAT
    \STATE choose a row index $i_k=i\in [m]$ (cyclically or randomly)
    \STATE update $x_{k+1}^* = x_{k}^* - \tfrac{\langle a_i,x_k \rangle-b_i}{\|a_i\|^2_2} \cdot a_i$ 
    \STATE update $x_{k+1} = \nabla f^*(x^*_{k+1})$
    \STATE increment $k =  k+1$
    \UNTIL{a stopping criterion is satisfied}
    \RETURN $x_{k+1}$
  \end{algorithmic}
\end{algorithm}

A more general version of randomized coordinate descent, namely APPROX have been proposed in~\cite{fercoq2015accelerated} to accelerate proximal coordinate descent methods for the minimization of composite functions and we state it as Algorithm~\ref{alg:ACD}. We concentrated on that to build our accelerated schemes in the primal space by transferring the dual iterates to the primal space by the relation $c_{k} = A^{\top}v_{k}$ and this results in Algorithm~\ref{alg:ARBK}.

\begin{algorithm}[ht]
  \caption{Dual Accelerated coordinate descent method (ACD)}
  \label{alg:ACD}
  \begin{algorithmic}[1]
    \STATE{\textbf{Input:} Choose  $y_0$ and set $z_0 = y_0$ and  $\theta_0 = \frac{1}{m}$, $b\in \RR^m$, $\mathbf{A} \in \RR^{m\times n}$.} 
    \STATE{ \textbf{Output:} (approximate) solution of  $\min_y \Psi(y)$ }
    \STATE initialize $k=0$
    \REPEAT
        \STATE update $v_k = (1 - \theta_k)y_k + \theta_k z_k    $
		\STATE choose a row index $i_k=i\in [m]$ with probability $p_i=\norm[2]{a_{i}}^{2}/\norm[F]{\mathbf{A}}^{2}$
    \STATE update $z_{k+1} = z_{k} - \frac{a_{i_k}^\top \nabla f^*(A^{\top}v_k) - b_{i_k}}{m\theta_k \|a_{i_k}\|^2_2} e_{i_k}$
    \STATE update $y_{k+1} = v_{k} + m \theta_k (z_{k+1} - z_k)$
    \STATE update $\theta_{k+1} = \dfrac{\sqrt{\theta_k^4 + 4\theta_k^2} - \theta_k^2}{2}$
    \STATE increment $k =  k+1$
    \UNTIL{a stopping criterion is satisfied}
    \RETURN $y_{k+1}$
  \end{algorithmic}
\end{algorithm}

\begin{algorithm}[ht]
\vspace{0.1cm}
  \caption{ Accelerated Randomized Bregman Kaczmarz method (ARBK)}
  \label{alg:ARBK}
  \begin{algorithmic}[1]
    \STATE{\textbf{Input:} Choose  $x_0^*$ and set $t_0 = x_0^*$ and  $\theta_0 = \frac{1}{m}$, $b\in \RR^m$, $\mathbf{A} \in \RR^{m\times n}$.} 
    \STATE{\textbf{Output:} (approximate) solution of $\min_x f(x) \quad \text{s.t.} \quad  \mathbf{A}x=b$ }
    \STATE initialize $k=0$
    \REPEAT
        \STATE update $c_k = (1 - \theta_k)x_k^* + \theta_k t_k    $
		\STATE choose a row index $i_k=i\in [m]$ with probability $p_i=\norm[2]{a_{i}}^{2}/\norm[F]{\mathbf{A}}^{2}$
    \STATE update $t_{k+1} = t_{k} - \tfrac{1}{m\theta_k \|a_{i_k}\|^2_2} (a_{i_k}^\top \nabla f^*(c_k) - b_{i_k})a_{i_k}$
    \STATE update $x_{k+1}^* = c_{k} + m \theta_k (t_{k+1} - t_k)$
    \STATE update $\theta_{k+1} = \dfrac{\sqrt{\theta_k^4 + 4\theta_k^2} - \theta_k^2}{2}$
    \STATE increment $k =  k+1$
    \UNTIL{a stopping criterion is satisfied}
    \RETURN $x_{k+1} = \nabla f^*(x_{k+1}^*)$
  \end{algorithmic}
\end{algorithm}

\begin{remark}
We have written the algorithms in a unified framework to emphasize their similarities. Practical implementations consider only two variables: $(c_k, t_k)$ for ARBK and $(v_k,z_k)$ for ACD. Despite their similarity, the two methods are used for two different purposes. Algorithm~\ref{alg:ACD} outputs $y_k$, an approximate solution of the dual problem~\eqref{eq:DP}, whereas Algorithm~\ref{alg:ARBK} returns $x_k$, an approximate solution of our primal problem~\eqref{eq:PB}.
\end{remark}

We  also use the following relation of the sequences, that is, the iterates of Algorithms~\ref{alg:ARBK} and~\ref{alg:ACD} satisfy for all $k \geq 1$,
\begin{equation}
\label{eq:primal_iterate}
    x^*_{k} = \mathbf{A}^\top y_k, \quad x_k = \nabla f^*(x^*_k)
\end{equation}
Furthermore, Algorithm~\ref{alg:BK} can be recovered from Algorithm~\ref{alg:ARBK} by setting
  $\theta_k = \theta_{0}$ for all $k$. In Algorithm~\ref{alg:BK}, setting $f(x) = \tfrac{1}{2}\|x\|^2_2$ give us the RK method while $f(x) = \lambda\|x\|_1 + \tfrac{1}{2}\|x\|^2_2$ give us the RSK method.

\section{Convergence results for accelerated Randomized Bregman-Kaczmarz method}
\label{sec:convergence}
\noindent In this section we first review basic convergence result of APPROX (ACD), which will be used later to
build convergence result for our method. We first recall the following properties on the sequence $\{\theta_k\}$.

\begin{lemma}~\cite{fercoq2016restarting}
\label{lm:sequence}
The sequence $(\theta_k)$ defined by $\theta_0 \leq 1$ and  $\theta_{k+1} = \frac{\sqrt{\theta_k^4 + 4\theta_k^2} - \theta_k^2}{2}$ satisfies
\begin{equation}
\begin{aligned}
\small
   \frac{(2-\theta_0)}{k + (2-\theta_0)/\theta_0} &\leq \theta_k \leq \frac{2}{k + 2/\theta_0}, \\ 
   \frac{1-\theta_{k+1}}{\theta^2_{k+1}} &= \frac{1}{\theta^2_k}, \quad \forall k=0,1,\dots \\  
   \theta_{k+1} &\leq \theta_k, \quad \forall k=0,1,\dots
\end{aligned}
\end{equation}
\end{lemma}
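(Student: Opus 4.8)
The plan is to derive all three assertions from one algebraic identity. Rearranging the recursion as $2\theta_{k+1}+\theta_k^2=\sqrt{\theta_k^4+4\theta_k^2}$ and squaring gives $4\theta_{k+1}^2+4\theta_{k+1}\theta_k^2=4\theta_k^2$, that is,
\[
\theta_{k+1}^2=(1-\theta_{k+1})\,\theta_k^2 .
\]
Since $\sqrt{\theta_k^4+4\theta_k^2}>\theta_k^2$ whenever $\theta_k>0$, the recursion produces $\theta_{k+1}>0$, and then the displayed identity forces $1-\theta_{k+1}>0$; so by induction $\theta_k\in(0,1)$ for $k\ge1$ (and $\theta_0\in(0,1]$ by hypothesis). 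Dividing the identity by $\theta_k^2\theta_{k+1}^2$ gives the middle claim $\tfrac{1-\theta_{k+1}}{\theta_{k+1}^2}=\tfrac1{\theta_k^2}$, and the monotonicity claim is then immediate: $\theta_{k+1}^2=(1-\theta_{k+1})\theta_k^2\le\theta_k^2$, hence $\theta_{k+1}\le\theta_k$.

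For the two-sided bound I would pass to $u_k:=1/\theta_k$, so that the identity reads $u_k^2-u_k=u_{k-1}^2$, i.e.\ $u_k^2-u_{k-1}^2=u_k$ for $k\ge1$. In particular $u_k>u_{k-1}$, and factoring the left-hand side gives
\[
u_k-u_{k-1}=\frac{u_k}{u_k+u_{k-1}} .
\]
Using $u_{k-1}<u_k$ we obtain $u_k-u_{k-1}>\tfrac{u_k}{2u_k}=\tfrac12$, hence by telescoping $u_k\ge u_0+\tfrac k2=\tfrac1{\theta_0}+\tfrac k2$, and taking reciprocals yields $\theta_k\le\big(\tfrac1{\theta_0}+\tfrac k2\big)^{-1}=\tfrac{2}{k+2/\theta_0}$, which is the upper bound.

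The lower bound on $\theta_k$ is the only step needing a little care: the crude increment estimate $u_k-u_{k-1}<1$ is too weak unless $\theta_0=1$, so instead I would control the ratio. From $\theta_k^2=(1-\theta_k)\theta_{k-1}^2$ we get $\tfrac{u_{k-1}}{u_k}=\tfrac{\theta_k}{\theta_{k-1}}=\sqrt{1-\theta_k}$, and since $\theta_k\le\theta_0$ and $\sqrt t\ge t$ on $[0,1]$ this gives $\tfrac{u_{k-1}}{u_k}\ge\sqrt{1-\theta_0}\ge 1-\theta_0$, i.e.\ $u_{k-1}\ge(1-\theta_0)u_k$. Plugging this into the increment formula,
\[
u_k-u_{k-1}=\frac{u_k}{u_k+u_{k-1}}\le\frac{u_k}{(2-\theta_0)u_k}=\frac1{2-\theta_0},
\]
so that by telescoping $u_k\le u_0+\tfrac{k}{2-\theta_0}=\tfrac1{\theta_0}+\tfrac k{2-\theta_0}$; taking reciprocals and simplifying gives $\theta_k\ge\big(\tfrac1{\theta_0}+\tfrac k{2-\theta_0}\big)^{-1}=\tfrac{2-\theta_0}{k+(2-\theta_0)/\theta_0}$, completing the proof. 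The main obstacle is precisely recognizing that the lower bound must come from the multiplicative estimate $\theta_k/\theta_{k-1}=\sqrt{1-\theta_k}\ge1-\theta_0$ rather than an additive one; the rest is routine, the only subtlety being the elementary sign checks that license the squaring and the reciprocal inequalities.
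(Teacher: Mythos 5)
Your proof is correct and complete. The paper itself gives no argument for this lemma --- it is imported verbatim from the cited reference (Fercoq--Qu), where the standard derivation establishes the identity $\tfrac{1-\theta_{k+1}}{\theta_{k+1}^2}=\tfrac{1}{\theta_k^2}$ exactly as you do and then obtains the two-sided bound by induction on $k$. Your route through $u_k=1/\theta_k$ is a nice alternative for that last part: the exact increment formula $u_k-u_{k-1}=u_k/(u_k+u_{k-1})$ turns both bounds into one-line telescoping estimates, with the upper bound from $u_{k-1}<u_k$ and the lower bound from the multiplicative control $u_{k-1}\ge(1-\theta_0)u_k$, which you correctly identify as the one non-obvious step (the naive additive bound $u_k-u_{k-1}<1$ would only give the constant $1$ in place of $1/(2-\theta_0)$ and hence a weaker lower bound). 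All the auxiliary sign conditions are in order; the only cosmetic point is that your argument tacitly needs $\theta_0>0$ (otherwise the divisions and the bounds are vacuous), which the lemma's statement omits but which holds in the paper's use case $\theta_0=1/m$.
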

\medskip

\begin{lemma}~\cite{fercoq2015accelerated}
\label{lm:fercoq}
Let $y_k, z_k$ the sequence generated by ACD, $\theta_{0} = \frac{1}{m}$ and any $\hat y \in \mathcal{Y}^*$. Then it holds 
\begin{align}
\small
\frac{1}{\theta^2_{k-1}}\mathbb{E} [\Psi(y_{k}) - \Psi^*] + \frac{1}{2\theta^2_0} \mathbb{E}[\|z_k - \hat y\|^2_{\mathbf{B}}] \nonumber \\  \leq \frac{1-\theta_0}{\theta^2_0} (\Psi(y_{0}) - \Psi^*) + \frac{1}{2\theta^2_0}\|y_0 - \hat y\|^2_{\mathbf{B}}
\end{align}
with ${\mathbf{B}} = \textbf{Diag}(\|a_1\|^2_2, \|a_2\|^2_2, \dots, \|a_m\|^2_2)$, where $\textbf{Diag}(d_1, d_2,\dots , d_m)$ denote the diagonal matrix with $d_1,d_2,\dots, d_m$ on the diagonal.
\end{lemma}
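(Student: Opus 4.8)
The plan is to observe that Algorithm~\ref{alg:ACD} is precisely the APPROX scheme of~\cite{fercoq2015accelerated} applied to the dual objective $\Psi$ from~\eqref{eq:DP}. Indeed, $\Psi$ is convex and finite on $\RR^m$, it carries no nonsmooth (proximal) term, its gradient is componentwise Lipschitz with constants $L_{\Psi,i}=\norm[2]{a_i}^2$, the coordinates $i_k$ are drawn with probabilities $p_i=\norm[2]{a_i}^2/\norm[F]{\mathbf{A}}^2$, and the natural geometry is the one induced by $\mathbf{B}=\textbf{Diag}(\norm[2]{a_1}^2,\dots,\norm[2]{a_m}^2)$. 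So the asserted inequality is the specialization of the APPROX convergence estimate to this setting, and it suffices to reproduce that analysis in our notation.

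First I would put the $y$-update in transparent form: substituting line~7 into line~8 of Algorithm~\ref{alg:ACD} and using $a_{i_k}^\top\nabla f^*(\mathbf{A}^\top v_k)-b_{i_k}=\nabla_{i_k}\Psi(v_k)$ and $\norm[2]{a_{i_k}}^2=L_{\Psi,i_k}$ shows that $y_{k+1}$ agrees with $v_k$ except in coordinate $i_k$, and in fact
\[
y_{k+1}=v_k-\tfrac{1}{L_{\Psi,i_k}}\,\nabla_{i_k}\Psi(v_k)\,e_{i_k},
\]
i.e.\ it is one coordinate-descent step taken at $v_k$. From the componentwise descent inequality this gives $\Psi(y_{k+1})\le\Psi(v_k)-\tfrac{1}{2L_{\Psi,i_k}}\lvert\nabla_{i_k}\Psi(v_k)\rvert^2$, while expanding $\norm[\mathbf{B}]{z_{k+1}-\hat y}^2$ — again using that only coordinate $i_k$ moves — yields an identity whose quadratic term is $\tfrac{1}{m^2\theta_k^2 L_{\Psi,i_k}}\lvert\nabla_{i_k}\Psi(v_k)\rvert^2$ and whose cross term is linear in $\nabla_{i_k}\Psi(v_k)$.

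The heart of the proof is a one-step contraction for the Lyapunov functional
\[
\mathcal{E}_k\eqdef\frac{1-\theta_k}{\theta_k^2}\bigl(\Psi(y_k)-\Psi^*\bigr)+\frac{1}{2\theta_0^2}\norm[\mathbf{B}]{z_k-\hat y}^2,
\]
namely $\EE[\mathcal{E}_{k+1}\mid\mathcal{F}_k]\le\mathcal{E}_k$, where $\mathcal{F}_k=\sigma(i_0,\dots,i_{k-1})$. To establish it I would take the conditional expectation over $i_k$ of the two bounds above, using $p_i=L_{\Psi,i}/\norm[F]{\mathbf{A}}^2$ so that the $p_i$-weighted sums collapse into full-gradient quantities; invoke convexity of $\Psi$ at $v_k$ to insert $\Psi^*=\Psi(\hat y)$ via $\scp{\nabla\Psi(v_k)}{v_k-\hat y}\ge\Psi(v_k)-\Psi^*$; use the convex-combination identity $v_k=(1-\theta_k)y_k+\theta_k z_k$ both to split $v_k-\hat y=(1-\theta_k)(y_k-\hat y)+\theta_k(z_k-\hat y)$ and to trade $\scp{\nabla\Psi(v_k)}{y_k-v_k}$ against $\Psi(y_k)-\Psi(v_k)$; and finally apply the identity $\tfrac{1-\theta_{k+1}}{\theta_{k+1}^2}=\tfrac{1}{\theta_k^2}$ from Lemma~\ref{lm:sequence} to align coefficients so that the $\Psi(v_k)$-terms cancel and only $\mathcal{E}_k$ remains on the right. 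Once the contraction holds, the tower property gives $\EE[\mathcal{E}_k]\le\mathcal{E}_0$; plugging in $z_0=y_0$ and rewriting $\mathcal{E}_k$ once more via $\tfrac{1-\theta_k}{\theta_k^2}=\tfrac{1}{\theta_{k-1}^2}$ produces exactly the claimed estimate.

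The step I expect to be the main obstacle is the bookkeeping inside the one-step contraction: correctly taking the conditional expectation of the single-coordinate random update and matching it — in the $\mathbf{B}$-geometry — with the full-gradient terms coming out of convexity, and then selecting the right multiples of the descent inequality and of $\norm[\mathbf{B}]{z_{k+1}-\hat y}^2$ so that, after the $\theta$-recursion is used, everything telescopes. This is precisely where the specific step sizes involving $m\theta_k\norm[2]{a_{i_k}}^2$ and the sampling probabilities $p_i$ are essential; the choices $\theta_0=\tfrac1m$ and $z_0=y_0$ then make the right-hand side collapse to the stated expression.
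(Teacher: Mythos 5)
The paper offers no proof of this lemma at all --- it is imported verbatim from \cite{fercoq2015accelerated} --- so your plan of re-deriving the APPROX estimate for the dual objective $\Psi$ is exactly the intended route, and most of your skeleton is right: the identity $y_{k+1}=v_k-\tfrac{1}{L_{\Psi,i_k}}\nabla_{i_k}\Psi(v_k)e_{i_k}$, the componentwise descent bound, the Lyapunov functional with the recursion $\tfrac{1-\theta_{k+1}}{\theta_{k+1}^2}=\tfrac{1}{\theta_k^2}$, and the final telescoping with $z_0=y_0$ all check out.

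There is, however, one concrete point where the sketch does not close, and it sits precisely in the bookkeeping step you yourself flag as the main obstacle. Taking the conditional expectation of $\|z_{k+1}-\hat y\|_{\mathbf{B}}^2$ under the stated probabilities $p_i=\norm[2]{a_i}^2/\norm[F]{\mathbf{A}}^2$, the cross term becomes
\[
-\frac{2}{m\theta_k}\sum_i p_i\,\nabla_i\Psi(v_k)\,(z_k-\hat y)_i
=-\frac{2}{m\theta_k\norm[F]{\mathbf{A}}^2}\,\langle \nabla\Psi(v_k),\, z_k-\hat y\rangle_{\mathbf{B}},
\]
a $\mathbf{B}$-weighted pairing, whereas the convexity inequalities you invoke next ($\scp{\nabla\Psi(v_k)}{v_k-\hat y}\ge\Psi(v_k)-\Psi^*$ and the analogous trade against $\Psi(y_k)-\Psi(v_k)$) hold for the \emph{Euclidean} pairing, since $\nabla\Psi$ is the Euclidean gradient. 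Plain convexity gives no control over $\langle\nabla\Psi(v_k),\cdot\rangle_{\mathbf{B}}$, so the $p_i$-weighted sum does not ``collapse into full-gradient quantities'' of the kind the argument needs. (The quadratic terms do cancel exactly with $\theta_0=1/m$, as you say.) The collapse you describe occurs exactly under uniform sampling $p_i=1/m$ --- the setting of the cited APPROX theorem --- in which case your one-step contraction goes through verbatim, because the $L_i$ inside $\mathbf{B}$ cancels against the $1/L_i$ in the step size and the uniform weight factors out. So either the sampling must be taken uniform in the argument (at the price of a mismatch with the probabilities printed in Algorithm~\ref{alg:ACD}), or a different Lyapunov norm and step size must be engineered for the importance-sampled variant; your proposal as written does not resolve this.
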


The following lemma gives the relation between the primal function $f$ and the dual function $\Psi$.
\begin{lemma}
\label{lm:IterateNormSuboptDual}
Let $b\in\mathcal{R}(\mathbf{A})$ and $(x_k^*,x_k)$ be a sequence such that  $x_k = \nabla f^*(x_k^*)$. Then, for any  $y_k\in\mathbb{R}^m$ with $x_k^* = \mathbf{A}^\top y_k$ it holds 
\begin{align}
D_f^{x_k^*}(x_k,\hat x) = \Psi(y_k)- \Psi^*.
\end{align}
\end{lemma}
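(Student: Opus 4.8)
The plan is to work directly from the Fenchel-equality form of the Bregman distance recorded just after Definition~\ref{def:D}, namely $D_f^{x^*}(x,y) = f^*(x^*) - \scp{x^*}{y} + f(y)$, which is valid whenever $x^* \in \partial f(x)$. First I would observe that the hypothesis $x_k = \nabla f^*(x_k^*)$ is equivalent to $x_k^* \in \partial f(x_k)$ (the standard consequence of the conjugacy between $f$ and $f^*$, using that $f$ is finite everywhere and $1$-strongly convex, so $f^{**}=f$ and $\partial f^*(x_k^*)=\{\nabla f^*(x_k^*)\}$). Hence the above identity applies with $x = x_k$, $x^* = x_k^*$, $y = \hat x$, giving
\[
D_f^{x_k^*}(x_k,\hat x) = f^*(x_k^*) - \scp{x_k^*}{\hat x} + f(\hat x).
\]

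Next I would substitute $x_k^* = \mathbf{A}^\top y_k$ and use primal feasibility $\mathbf{A}\hat x = b$ (available because $b\in\Rcal(\mathbf{A})$) to rewrite the middle term as $\scp{\mathbf{A}^\top y_k}{\hat x} = \scp{y_k}{\mathbf{A}\hat x} = \scp{y_k}{b} = b^\top y_k$. Therefore
\[
D_f^{x_k^*}(x_k,\hat x) = f^*(\mathbf{A}^\top y_k) - b^\top y_k + f(\hat x) = \Psi(y_k) + f(\hat x),
\]
by the definition of the dual objective $\Psi$ in~\eqref{eq:DP}.

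It then only remains to identify $f(\hat x)$ with $-\Psi^*$, i.e. to invoke absence of a duality gap; this is where the consistency assumption $b\in\Rcal(\mathbf{A})$, which guarantees feasibility of~\eqref{eq:PB} and, together with strong convexity, nonemptiness of $\mathcal{Y}^*$, is used. Concretely, pick $\hat y\in\mathcal{Y}^*$ and use the optimality relations~\eqref{eq:opt_sol}: from $\hat x = \nabla f^*(\mathbf{A}^\top\hat y)$ we get $\mathbf{A}^\top\hat y \in \partial f(\hat x)$, so Fenchel's equality gives $f(\hat x) + f^*(\mathbf{A}^\top\hat y) = \scp{\hat x}{\mathbf{A}^\top\hat y} = \scp{\mathbf{A}\hat x}{\hat y} = b^\top\hat y$, whence $f(\hat x) = b^\top\hat y - f^*(\mathbf{A}^\top\hat y) = -\Psi(\hat y) = -\Psi^*$. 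Substituting into the previous display yields $D_f^{x_k^*}(x_k,\hat x) = \Psi(y_k) - \Psi^*$, as claimed.

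The computation itself is routine; the only points that deserve care are the justifications that the Bregman-distance identity and Fenchel's equality may legitimately be applied at the two relevant places (the subgradient inclusions $x_k^* \in \partial f(x_k)$ and $\mathbf{A}^\top\hat y \in \partial f(\hat x)$), and the appeal to strong duality $f(\hat x) = -\Psi^*$, for which the consistency hypothesis $b\in\Rcal(\mathbf{A})$ is precisely what is needed.
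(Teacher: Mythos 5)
Your proposal is correct and follows essentially the same route as the paper: rewrite the Bregman distance via Fenchel's equality, substitute $x_k^* = \mathbf{A}^\top y_k$ and $\mathbf{A}\hat x = b$ to obtain $\Psi(y_k) + f(\hat x)$, and conclude by strong duality. The only difference is that you spell out the identity $f(\hat x) = -\Psi^*$ explicitly via the optimality relations~\eqref{eq:opt_sol}, where the paper simply invokes strong duality.
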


\begin{proof}
By Definition~\ref{def:D} we have that:
\begin{align*}
D_f^{x_k^*}(x_k,\hat x) &= f^*(x_k^*) + f(\hat x) - \langle x_k^*,\hat x\rangle \\
&= f^*(A^{\top}y_k) - \langle b,y_k\rangle  + f(\hat x) \\
&= \Psi(y_k) + f^*,
\end{align*}
and since  $\Psi^* = -\max -\Psi = -f^*$ (by strong duality), the assertion follows.
\end{proof}

Recall that for our proposed method, we are interested in showing convergence results for iterates $x_k$ given in Equation~\eqref{eq:primal_iterate} and not for iterates $y_k$. By showing that the ACD and ARBK methods are equivalent, and using the recent theoretical results~\cite{necoara2016,nesterov2012efficiency,fercoq2015accelerated} and Lemma~\ref{lm:IterateNormSuboptDual}, we obtain the following convergence results for ARBK as a byproduct.

\begin{theorem}
\label{thm:1}
	Let $x_k$ be the sequence generated by ARBK and let set $\theta_k = \theta_{0},\forall k$. Then, it holds:
\begin{align}
	\small
   \frac{1}{2}\mathbb{E}[\|x_k -\hat{x}\|^2_2] \leq\mathbb{E}[D_f^{x^*_k}(x_k,\hat{x})] \leq \frac{2\norm[F]{\mathbf{A}}^{2}}{k+4}R^{2}_{0}(y_0).
\end{align}
where $\small R_{0}(y_0) = \text{max}_{y} \{\text{min}_{\hat y \in \mathcal{Y}*} \|y - \hat y\|_{2}: \Psi(y) \leq \Psi(y_{0})\}.$
\end{theorem}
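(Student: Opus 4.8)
The plan is to prove the two inequalities separately, the left one directly from strong convexity and the right one by transferring the question to the dual space and invoking the known rate of randomized coordinate descent.

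\textbf{Left inequality.} Since $f$ has been rescaled to be $1$-strongly convex, the lower bound in~\eqref{eq:D}, applied pointwise with $x=x_k$, $y=\hat x$ and $x^*=x_k^*\in\partial f(x_k)$, gives $\tfrac12\norm[2]{x_k-\hat x}^2\le D_f^{x_k^*}(x_k,\hat x)$ for every realization of the algorithm. Taking expectations (monotonicity of $\EE$) yields $\tfrac12\EE[\norm[2]{x_k-\hat x}^2]\le\EE[D_f^{x_k^*}(x_k,\hat x)]$, which is the first claimed inequality.

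\textbf{Reduction to a dual gap.} Consistency of the system gives $b\in\Rcal(\mathbf{A})$, and by~\eqref{eq:primal_iterate} the ARBK iterates satisfy $x_k=\nabla f^*(x_k^*)$ with $x_k^*=\mathbf{A}^\top y_k$ for the associated dual iterates $(y_k)$. Hence Lemma~\ref{lm:IterateNormSuboptDual} applies and gives $D_f^{x_k^*}(x_k,\hat x)=\Psi(y_k)-\Psi^*$ for each realization, so after taking expectations $\EE[D_f^{x_k^*}(x_k,\hat x)]=\EE[\Psi(y_k)-\Psi^*]$. It therefore suffices to bound the dual suboptimality $\EE[\Psi(y_k)-\Psi^*]$.

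\textbf{Identifying the dual algorithm.} In the regime $\theta_k\equiv\theta_0=\tfrac1m$, ARBK collapses to BK (as noted after Algorithm~\ref{alg:ARBK}), and correspondingly ACD collapses to plain randomized coordinate descent on $\Psi$: with $z_0=y_0$ one checks by induction that $v_k=y_k=z_k$ for all $k$ and that the update reduces to $y_{k+1}=y_k-\tfrac{1}{\norm[2]{a_{i_k}}^2}\big(a_{i_k}^\top\nabla f^*(\mathbf{A}^\top y_k)-b_{i_k}\big)e_{i_k}$, i.e. a coordinate-descent step on $\Psi$ with step length $1/L_{\Psi,i_k}$ and sampling probability $p_i=\norm[2]{a_i}^2/\norm[F]{\mathbf{A}}^2$. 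So $(y_k)$ is exactly the RCD sequence for~\eqref{eq:DP}.

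\textbf{Applying the RCD rate.} The dual function $\Psi$ is convex, differentiable, has componentwise Lipschitz gradient with constants $L_{\Psi,i}=\norm[2]{a_i}^2$, hence $\sum_{i\in[m]}L_{\Psi,i}=\norm[F]{\mathbf{A}}^2$, and $\mathcal{Y}^*\neq\emptyset$. Because the sampling is the importance sampling $p_i=L_{\Psi,i}/\sum_j L_{\Psi,j}$, one RCD step decreases $\Psi$ in expectation by $\tfrac{1}{2\norm[F]{\mathbf{A}}^2}\norm[2]{\nabla\Psi(y_k)}^2$, so the method behaves like gradient descent on $\Psi$ with constant step $1/\norm[F]{\mathbf{A}}^2$; combining this with convexity of $\Psi$ on the level set $\{\Psi(y)\le\Psi(y_0)\}$, whose radius is $R_0(y_0)$, and unrolling the resulting recursion gives the standard bound
\[
\EE[\Psi(y_k)-\Psi^*]\le\frac{2\norm[F]{\mathbf{A}}^2}{k+4}\,R_0^2(y_0),
\]
as used in~\cite{necoara2016,nesterov2012efficiency,fercoq2015accelerated}. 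Chaining this with $\EE[D_f^{x_k^*}(x_k,\hat x)]=\EE[\Psi(y_k)-\Psi^*]$ and the left inequality completes the proof. The only genuinely delicate point is the bookkeeping: verifying that the $\theta_k\equiv\theta_0$ specialization of ARBK/ACD is literally plain RCD on $\Psi$, and that importance sampling is precisely what replaces the weighted-norm radius of the accelerated Lemma~\ref{lm:fercoq} by the Euclidean-norm level-set radius $R_0(y_0)$ and produces the factor $\sum_i L_{\Psi,i}=\norm[F]{\mathbf{A}}^2$.
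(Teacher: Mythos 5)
Your proposal is correct and follows essentially the same route as the paper's own (very terse) proof: the left inequality from the strong-convexity bound~\eqref{eq:D}, the identity $D_f^{x_k^*}(x_k,\hat x)=\Psi(y_k)-\Psi^*$ from Lemma~\ref{lm:IterateNormSuboptDual}, and the classical Nesterov rate for randomized coordinate descent on the dual (with importance sampling $p_i=L_{\Psi,i}/\sum_j L_{\Psi,j}$ yielding the constant $\norm[F]{\mathbf{A}}^2$ and the Euclidean level-set radius $R_0(y_0)$). Your write-up is in fact more explicit than the paper's, notably in verifying that the $\theta_k\equiv\theta_0=1/m$ specialization of ARBK collapses to BK, i.e.\ to plain RCD on $\Psi$.
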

\medskip

\begin{proof}
    This rate follows from classical results~\cite{nesterov2012efficiency} for coordinate descent methods and applying Lemma~\ref{lm:IterateNormSuboptDual} and Eq~\eqref{eq:D}.
\end{proof}
\medskip

\begin{theorem}
\label{thm:2}
Let $x_k$ the sequence generated by ARBK, $\theta_0 = \tfrac{1}{m}$ and any $\hat y \in \mathcal{Y}^*$. Then, it holds: 

\begin{align}
\small
\mathbb{E}[D_f^{x^*_k}(x_k,\hat{x})] \leq  \frac{4m^2}{(k-1+2m)^2} C_0,
\end{align}
\begin{align}
\small
\mathbb{E}[\|x_k -\hat{x}\|^2_2] \leq   \frac{8m^2}{(k-1+2m)^2} C_0,
\end{align}
where $$C_0 = \bigg(1 - \frac{1}{m}\bigg) D_f^{x^*_0}(x_0,\hat{x}) + \frac{1}{2}\|y_0 - \hat y\|^2_{\mathbf{B}}$$
\end{theorem}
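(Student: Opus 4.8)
The plan is to obtain Theorem~\ref{thm:2} as a transfer of the ACD guarantee in Lemma~\ref{lm:fercoq} to the primal iterates via the primal--dual dictionary~\eqref{eq:primal_iterate}. First I would pin down the equivalence of ARBK and ACD: run ACD from a point $y_0$ with $\mathbf{A}^\top y_0 = x_0^*$ and $z_0 = y_0$, and show by induction on $k$ that the iterates of the two algorithms are linked by $t_k = \mathbf{A}^\top z_k$, $c_k = \mathbf{A}^\top v_k$, and $x_k^* = \mathbf{A}^\top y_k$. The induction step is routine: multiplying the $z$- and $y$-updates of ACD by $\mathbf{A}^\top$, using $\mathbf{A}^\top e_{i_k} = a_{i_k}$ and $\nabla_i\Psi(v_k) = a_i^\top\nabla f^*(\mathbf{A}^\top v_k) - b_i = a_i^\top\nabla f^*(c_k) - b_i$, reproduces exactly the $t$- and $x^*$-updates of ARBK (the $c_k$ and $\theta_k$ recursions match verbatim). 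Hence every bound on $\Psi(y_k)-\Psi^*$ coming from ACD is a bound on the ARBK iterate $x_k^* = \mathbf{A}^\top y_k$.

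Next I would apply Lemma~\ref{lm:IterateNormSuboptDual}, which for $x_k = \nabla f^*(x_k^*)$ and $x_k^* = \mathbf{A}^\top y_k$ gives $D_f^{x_k^*}(x_k,\hat x) = \Psi(y_k) - \Psi^*$ at every index, in particular at $k$ and at $0$. Substituting these two identities into Lemma~\ref{lm:fercoq} and dropping the nonnegative term $\tfrac{1}{2\theta_0^2}\mathbb{E}[\|z_k-\hat y\|_{\mathbf{B}}^2]$ from its left-hand side (note $\mathbf{B}$ is positive definite since $a_i\neq 0$) yields
\[
\tfrac{1}{\theta_{k-1}^2}\,\mathbb{E}\big[D_f^{x_k^*}(x_k,\hat x)\big] \;\le\; \tfrac{1-\theta_0}{\theta_0^2}\,D_f^{x_0^*}(x_0,\hat x) + \tfrac{1}{2\theta_0^2}\,\|y_0-\hat y\|_{\mathbf{B}}^2 .
\]
With $\theta_0 = \tfrac1m$ we have $\tfrac{1-\theta_0}{\theta_0^2} = m^2\big(1-\tfrac1m\big)$ and $\tfrac{1}{2\theta_0^2} = \tfrac{m^2}{2}$, so the right-hand side is precisely $m^2 C_0$. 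It remains to lower-bound $\tfrac{1}{\theta_{k-1}^2}$: by Lemma~\ref{lm:sequence}, $\theta_{k-1}\le \tfrac{2}{(k-1)+2/\theta_0} = \tfrac{2}{k-1+2m}$, hence $\tfrac{1}{\theta_{k-1}^2}\ge \tfrac{(k-1+2m)^2}{4}$, which combined with the displayed inequality gives the first assertion $\mathbb{E}[D_f^{x_k^*}(x_k,\hat x)]\le \tfrac{4m^2}{(k-1+2m)^2}C_0$.

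Finally, the second bound follows from the left inequality in~\eqref{eq:D}: since $f$ is $1$-strongly convex and $x_k^*\in\partial f(x_k)$ (because $x_k = \nabla f^*(x_k^*)$), we get $\tfrac12\|x_k-\hat x\|_2^2 \le D_f^{x_k^*}(x_k,\hat x)$, so taking expectations and using the first assertion gives $\mathbb{E}[\|x_k-\hat x\|_2^2]\le 2\,\mathbb{E}[D_f^{x_k^*}(x_k,\hat x)]\le \tfrac{8m^2}{(k-1+2m)^2}C_0$. I expect the main (and essentially only) obstacle to be the bookkeeping in the first step: one must ensure that the ACD trajectory whose rate we quote is exactly the shadow of the ARBK trajectory, with a consistent initialization $\mathbf{A}^\top y_0 = x_0^*$, so that the quantity $C_0$ — which involves both $y_0$ and $\hat y$ — is the correct one appearing in the bound; once the equivalence is established, the remaining steps are substitution and the elementary estimate for $\theta_k$.
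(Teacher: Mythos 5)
Your proposal is correct and follows exactly the route of the paper's (one-line) proof: transfer the ACD guarantee of Lemma~\ref{lm:fercoq} to the primal iterates via the correspondence $x_k^*=\mathbf{A}^\top y_k$, convert the dual suboptimality to the Bregman distance with Lemma~\ref{lm:IterateNormSuboptDual}, bound $1/\theta_{k-1}^2$ with the first inequality of Lemma~\ref{lm:sequence}, and finish with the strong-convexity inequality~\eqref{eq:D}. The constants all check out, and your explicit inductive verification of the ARBK/ACD equivalence (with the initialization caveat $\mathbf{A}^\top y_0 = x_0^*$) supplies a detail the paper only asserts via~\eqref{eq:primal_iterate}.
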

\medskip

\begin{proof}
    This result follows from Lemma~\ref{lm:fercoq}, Lemma~\ref{lm:IterateNormSuboptDual}, Eq~\eqref{eq:D} and the first inequality in Lemma~\ref{lm:sequence}.
\end{proof}
\medskip

\noindent Theorem~\ref{thm:2} shows that the iterates $x_k$ of Algorithm~\ref{alg:ARBK} converge at the rate $\mathcal{O}(1/k^2)$, thus accelerating its standard counterpart, Algorithm~\ref{alg:BK}, which has $\mathcal{O}(1/k)$ rate of convergence (cf. Theorem~\ref{thm:1}). To the best of our knowledge, accelerated Kaczmarz variants have not yet been
proposed for problem \eqref{eq:PB} and the convergence guarantees for ARBK algorithm from Theorem~\ref{thm:2} are new.


\section{Experiments}
\label{sec:numerics}
\noindent We present several experiments to demonstrate the effectiveness of Algorithm \ref{alg:ARBK} under various conditions. In particular, we study the effects of the sparsity parameter $\lambda$ and the condition number $\kappa$ of the matrix $\mathbf{A}$. The simulations were performed in \texttt{Python} on an Intel Core i7 computer with 16GB RAM. For all the experiments we consider $f(x) = \lambda \cdot \|x\|_1 + \frac{1}{2}\|x\|^2_2$, where $\lambda$ is the sparsity parameter and we compared ARBK, Algorithm~\ref{alg:BK} (which is in this case the randomized sparse Kaczmarz method (RSK)) and the Nesterov acceleration of RSK, (NRSK) \cite[Method ACDM in Section 5]{nesterov2012efficiency}. Synthetic data for the experiments is generated as follows:
all elements of the data matrix $\mathbf{A} \in \RR^{m\times n}$ are chosen independent and identically distributed from the standard normal distribution $\mathcal{N}(0, 1)$. We constructed overdetermined, square, and underdetermined linear systems. To construct sparse solutions $\hat x \in \RR^n$, we generate a random vector $y$ from the standard normal distribution $\mathcal{N}(0, 1)$ and we set $\hat x = S_{\lambda}(\mathbf{A}^\top y)$, which comes from Eq~\eqref{eq:opt_sol} and the corresponding right hand side is $b = \mathbf{A}\hat x \in \RR^m$. For each experiment, we run independent trials each starting with the initial iterate $x_0=0$. We measure performance by plotting the average relative residual error $\| \mathbf{A}x - b\|_2/\|b\|_2$ and the average error $\|x_k - \hat x \|_2/\|\hat x\|_2$ against the number of epochs.

\medskip 

\noindent Figure \ref{fig:example} and Figure \ref{fig:example1} show the result for an overdetermined and consistent system where the value $\lambda = 30$ was used. Note that the usual RSK and NRSK variants perform consistently well. Moreover, we observe experimentally that the ARBK method gives us faster convergence.

\medskip 

\noindent Figure \ref{fig:example2} and Figure \ref{fig:example3} show the results  for a well and ill-conditioned underdetermined and consistent system, respectively,  where the values $\lambda = 30$ were used. All methods take advantage of the fact that the vector $\hat x$ is sparse. Moreover, in Figure \ref{fig:example1}, the RSK and NRSK methods do not reduce the residual as fast as the ARBK method.

\medskip 

\noindent Figure \ref{fig:example4} report the performance of RSK, NRSK, and ARBK on the Mnist dataset available in the Tensorflow framework~\cite{abadi2016tensorflow}. We randomly select a datapoint and consider it as our $\hat x$. We use an underdetermined matrix $A$ and show the relative residuals, the relative errors, and the 4 images which correspond to the original image and the reconstruction using different methods.

\begin{figure}[htb]%
    \centering
    \subfloat[\centering Relative Residual]{{\includegraphics[width=0.57\linewidth]{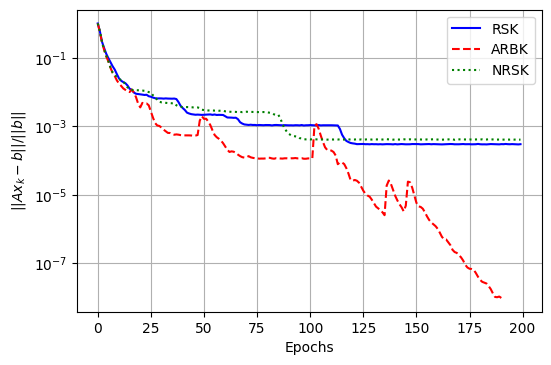} }}%
    \qquad
    \subfloat[\centering Error]{{\includegraphics[width=0.57\linewidth]{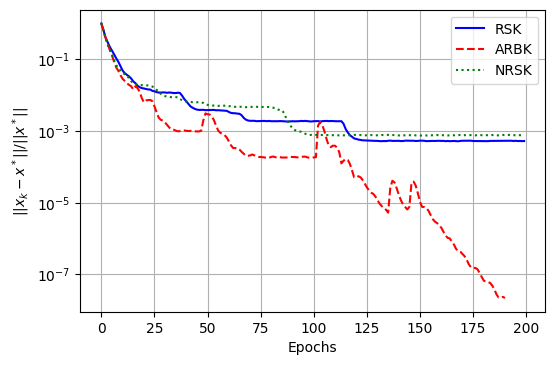} }}%
    \caption{\small A comparison of randomized sparse Kaczmarz (blue), Nesterov acceleration scheme (green) and ARBK method (red), $m = 700, n = 700,$ sparsity $s=182$, $\lambda=30$, $\kappa(A) =1150$.}%
    \label{fig:example}%
    \vspace{-0.4cm}
\end{figure}

\begin{figure}[htb]%
\vspace{0.1cm}
    \centering
    \subfloat[\centering Relative Residual]{{\includegraphics[width=0.57\linewidth]{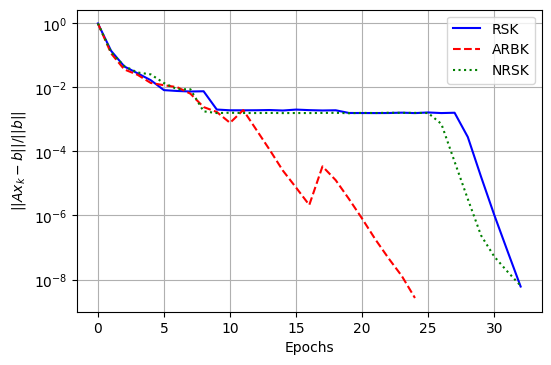} }}%
    \qquad
    \subfloat[\centering Error]{{\includegraphics[width=0.57\linewidth]{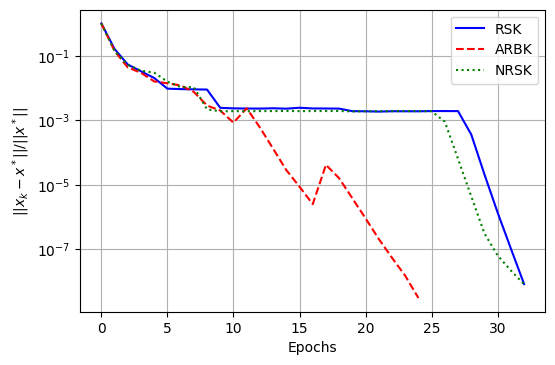} }}%
    \caption{\small A comparison of randomized sparse Kaczmarz (blue), Nesterov acceleration scheme (green) and ARBK method (red), $m = 900, n = 200,$ sparsity $s=65$, $\lambda=30$, $\kappa(A) =2.70$.}%
    \label{fig:example1}%
    \vspace{-0.1cm}
\end{figure}

\begin{figure}[htb]%
    \centering
    \subfloat[\centering Relative Residual]{{\includegraphics[width=0.57\linewidth]{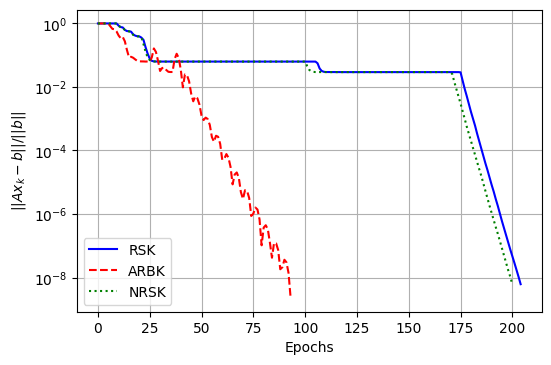} }}%
    \qquad
    \subfloat[\centering Error]{{\includegraphics[width=0.57\linewidth]{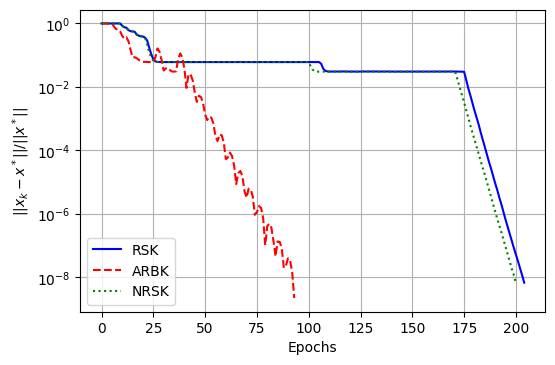} }}%
    \caption{\small A comparison of randomized sparse Kaczmarz (blue), Nesterov acceleration scheme (green) and ARBK method (red), $m = 500, n = 784,$ sparsity $s=7$, $\lambda=60$, $\kappa(A) =8.98$.}%
    \label{fig:example2}%
    \vspace{-0.2cm}
\end{figure}

\begin{figure}[htb]%
\vspace{0.1cm}
    \centering
    \subfloat[\centering Relative Residual]{{\includegraphics[width=0.57\linewidth]{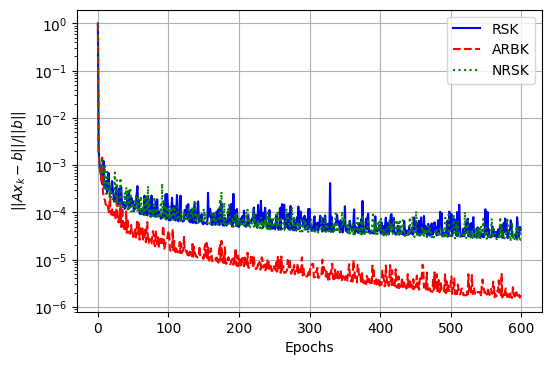} }}%
    \qquad
    \subfloat[\centering Error]{{\includegraphics[width=0.57\linewidth]{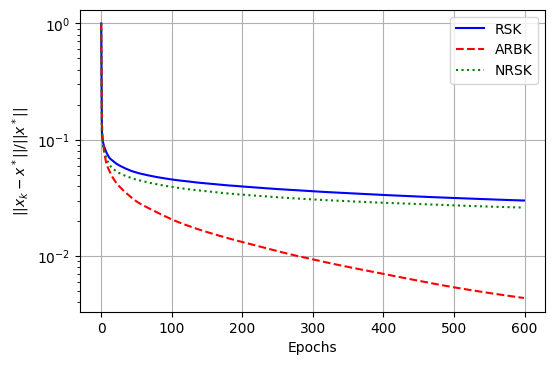} }}%
    \caption{\small A comparison of randomized sparse Kaczmarz (blue), Nesterov acceleration scheme (green) and ARBK method (red), $m = 300, n = 900,$ sparsity $s=231$, $\lambda=15$, $\kappa(A) =2990$.}%
    \label{fig:example3}%
\end{figure}

\begin{figure}[htb]%
\vspace{-0.2cm}
    \centering
    \subfloat[\centering Relative Residual]{{\includegraphics[width=.45\linewidth]{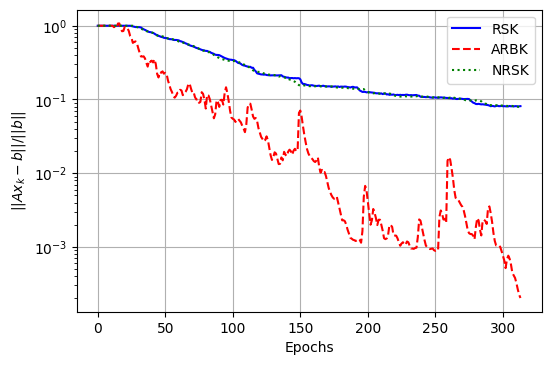} }}
    \subfloat[\centering Error]{{\includegraphics[width=.45\linewidth]{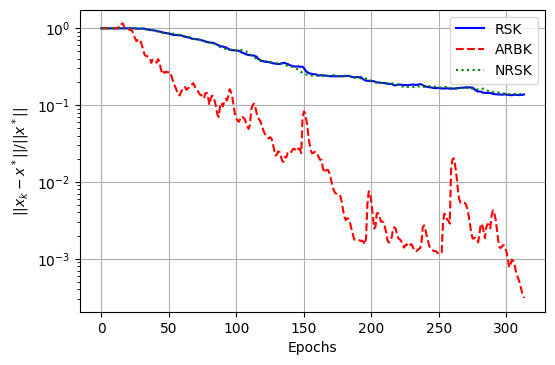} }}\\
    \subfloat[\centering Reconstruction]      {
    \includegraphics[width=.5\linewidth]{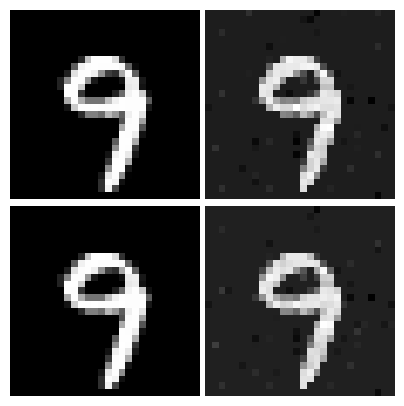}
    }
    \caption{\small A comparison of randomized sparse Kaczmarz(RSK) (blue), Nesterov acceleration scheme(NRSK) (green) and ARBK method (red). On top from left to the right, the original picture, the reconstruction of RSK. On the bottom from left to right the reconstruction of ARBK and the reconstruction of NRSK.$m = 500, n = 784,$ sparsity $s=135$, $\lambda=30$, $\kappa(A) =8.98$.}%
    \label{fig:example4}%
    \vspace{-0.5cm}
\end{figure}

\section{CONCLUSIONS}
\label{sec:conclusion}
\noindent In this paper we have proved that the iterates of the accelerated randomized Bregman-Kaczmarz  method (Algorithm \ref{alg:ARBK}) converge in expectation  at a rate $\mathcal{O}(1/k^2)$ for consistent linear systems.  Numerical experiments show that the method (Algorithm \ref{alg:ARBK}) performs consistently well over a range of  values of $\lambda$, provides very good reconstruction quality as $\lambda$ increases, and demonstrates the benefit of using this method to recover sparse solutions of linear systems.






\section*{ACKNOWLEDGMENT}
\noindent The research leading to these results has received funding from:  ITN-ETN project TraDE-OPT funded by the European Union’s Horizon 2020 Research and Innovation Programme under the Marie Skolodowska-Curie grant agreement No. 861137;  NO Grants 2014-2021, RO-NO-2019-0184, under project ELO-Hyp, contract no. 24/2020;  UEFISCDI PN-III-P4-PCE-2021-0720, under project L2O-MOC, nr. 70/2022.



\bibliographystyle{plain}      
\bibliography{main}   

\end{document}